\newtheorem{theorem}{Theorem}[section]
\newtheorem{lemma}[theorem]{Lemma}
\numberwithin{figure}{section}
\theoremstyle{definition}
\newtheorem{definition}[theorem]{Definition}
\theoremstyle{remark}
\newtheorem{remark}[theorem]{Remark}
\numberwithin{equation}{section}
	\DeclareMathOperator{\loc}{loc}
	\DeclareMathOperator*{\esssup}{ess\,sup}
	\DeclareMathOperator{\cp}{cap}
\begin{document}

\title[Spectral Properties]{Spectral Properties of the Neumann-Laplace operator in Quasiconformal Regular Domains}

\author{V.~Gol'dshtein, V.~Pchelintsev, A.~Ukhlov}

\begin{abstract}
In this paper we study spectral properties of the Neumann-Laplace operator in planar quasiconformal regular domains 
$\Omega\subset\mathbb R^2$. This study is based on the quasiconformal theory of composition operators on Sobolev spaces. Using the composition operators theory we obtain estimates of 
constants in Poincar\'e-Sobolev inequalities and as a consequence lower estimates of the first non-trivial eigenvalue of the Neumann-Laplace operator in planar quasiconformal regular domains.
\end{abstract}
\maketitle
\footnotetext{\textbf{Key words and phrases:} Elliptic equations, Sobolev spaces, quasiconformal mappings.} 
\footnotetext{\textbf{2010
Mathematics Subject Classification:} 35P15, 46E35, 30C65.}

\section{Introduction}

We study the spectral problem for the Laplace operator with the Neumann boundary condition in
planar quasiconformal regular domains  $\Omega \subset \mathbb{R}^2$.
The weak statement of this spectral problem is as follows: a function $u$ solves this problem iff 
$u \in W^{1}_{2}(\Omega)$ and
$$
\int\limits _{\Omega} \nabla u(x) \cdot \nabla v(x)\,dx =
\mu \int\limits _{\Omega} u(x) v(x)\,dx
$$ 
for all $v \in W^{1}_{2}(\Omega)$. 

We prove discreetness of the spectrum of the Neumann--Laplace operator in 
quasiconformal $\beta$-regular domains and obtain the lower estimates of the first non-trivial eigenvalue in the terms of quasiconformal geometry of domains:

\vskip 0.3cm
\noindent
{\bf Theorem A.}
\textit{Let $\Omega \subset \mathbb R^2$ be a $K$-quasiconformal $\beta$-regular domain. Then the spectrum of the Neumann--Laplace operator in $\Omega$ is discrete,
and can be written in the form of a non-decreasing sequence:
\[
0=\mu_0(\Omega)<\mu_1(\Omega)\leq \mu_2(\Omega)\leq \ldots \leq \mu_n(\Omega)\leq \ldots ,
\]
and
\[
\frac{1}{\mu_1(\Omega)} \leq 
\frac{4K}{\sqrt[\beta]{\pi}} \left(\frac{2\beta -1}{\beta -1}\right)^{\frac{2 \beta-1}{\beta}} \big\|J_{\varphi}\mid L_{\beta}(\mathbb D)\big\|,
\]
where $\varphi:\mathbb D \to \Omega$ is the $K$-quasiconformal mapping. }
\vskip 0.3cm

\begin{definition}
Let $\Omega \subset \mathbb{R}^2$ be a simply connected planar domain. Then $\Omega$ is called a $K$-quasiconformal $\beta$-regular domain if there exists a $K$-quasiconformal mapping $\varphi : \mathbb D \to \Omega$ such that
$$
\int\limits_\mathbb D |J(x, \varphi)|^{\beta}~dx < \infty \quad\text{for some}\quad \beta >1.
$$
The domain $\Omega \subset \mathbb{R}^2$ is called a $K$-quasiconformal regular domain if it is a $K$-quasiconformal $\beta$-regular domain for some $\beta>1$.
\end{definition}

The notion of quasiconformal regular domains is a generalization of the notion of conformal regular domains was introduced in~\cite{BGU15} and was used for study conformal spectral stability of the Laplace operator (see, also \cite{BGU16}). 

Recall that a homeomorphism $\varphi:\Omega \rightarrow \widetilde{\Omega}$
between planar domains $\Omega, \widetilde{\Omega}\subset\mathbb R^2$ is called a $K$-quasiconformal mapping if it preserves
orientation, belongs to the Sobolev class $W_{2,\loc}^{1}(\Omega)$
and its directional derivatives $D_{v}$ satisfy the distortion inequality
$$
\max\limits_{v: |v|=1}|D_{v}\varphi|\leq K\min_{{v: |v|=1}}|D_{v}\varphi|\,\,\,
\text{a.~e. in}\,\,\, \Omega \,.
$$

Note, that class of quasiconformal regular domain includes the class of Gehring domains \cite{AK} and can be described in terms of quasihyperbolic geometry \cite{KOT}. 

\begin{remark}
The notion of quasiconformal $\beta$-regular domains is more general then the notion of conformal $\alpha$-regular domains. Consider, for example, the unit square $\mathbb Q\subset\mathbb R^2$. Then $\mathbb Q$ is a conformal $\alpha$-regular domain for $2<\alpha\leq 4$ $(\alpha=2\beta)$ \cite{GU16} and is a quasiconformal $\beta$-regular domain for all $1<\beta\leq \infty$ because the unit square $\mathbb Q$ is quasiisometrically equivalent to the unit disc $\mathbb D$.
\end{remark}

\begin{remark}
Because $\varphi: \mathbb D \to \Omega$ is a quasiconformal mapping, then integrability of the derivative is equivalent to integrability of the Jacobian:
$$
\int\limits_\mathbb D |J(x,\varphi)|^{\beta}~dx\leq \int\limits_\mathbb D |D \varphi(x)|^{2\beta}~dx 
\leq K^{\beta}\int\limits_\mathbb D |J(x,\varphi)|^{\beta}~dx.
$$
\end{remark}

\vskip 0.3cm

In 1961 G.Polya  \cite{P60} obtained upper estimates for eigenvalues of Neumann-Laplace operator in so-called plane-covering domains. Namely, for the first eigenvalue:
$$
{\mu_1(\Omega)}\leq 4\pi|\Omega|^{-1}.
$$

The lower estimates for $\mu_1(\Omega)$ were known before only
for convex domains. In the classical work \cite{PW} it was proved that if $\Omega$ is convex with diameter $d(\Omega)$ (see, also \cite{ENT,FNT,V12}), then
\begin{equation*}
\label{eq:PW}
\mu_1(\Omega)\geq \frac{\pi^2}{d(\Omega)^2}.
\end{equation*}

In \cite{GU16} we proved, that if $\Omega\subset\mathbb R^2$ be a conformal regular domain, then the spectrum of Neumann-Laplace operator in $\Omega$ is discrete and the first non-trivial eigenvalue depends on hyperbolic geometry of the domain. Because quasiconformal mappings represent a more flexible class of mapping in the present paper we suggest an approach to the Poincar\'e-Sobolev inequalities which is based on the quasiconformal mappings theory in connection with the composition operators theory on Sobolev spaces.

\vskip 0.3cm

Theorem A is based on the Poincar\'e--Sobolev inequalities in quasiconformal regular domains:

\vskip 0.3cm
\noindent
{\bf Theorem B.}
\textit{Let $\Omega\subset\mathbb R^2$ be a $K$-quasiconformal $\beta$-regular domain. Then:
\begin{enumerate}[(1)]
\item the embedding operator
\[
i_{\Omega}:W_2^1(\Omega) \hookrightarrow L_s(\Omega)
\]
is compact for any $s \geq 1$;
\item for any function $f \in W^{1}_{2}(\Omega)$ and for any $s \geq 1$, the Poincar\'e--Sobolev inequality 
\[
\inf\limits_{c \in \mathbb R}\left(\int\limits_\Omega |f(y)-c|^sdy\right)^{\frac{1}{s}} \leq B_{s,2}(\Omega)
\left(\int\limits_\Omega |\nabla f(y)|^2 dy\right)^{\frac{1}{2}}
\] 
holds with the constant
$$
B_{s,2}(\Omega) \leq K^{\frac{1}{2}} B_{\frac{\beta s}{\beta-1},2}(\mathbb D) \|J_{\varphi}\mid L_{\beta}(\mathbb D)\|^{\frac{1}{s}}.
$$
\end{enumerate}
 }

Here
$B_{r,2}(\mathbb D) \leq \left(2^{-1} \pi\right)^{\frac{2-r}{2r}}\left(r+2\right)^{\frac{r+2}{2r}}$, $r=\beta s/(\beta -1)$
is the exact constant in the Poincar\'e-Sobolev inequality for unit disc $\mathbb D$
$$
\inf\limits_{c \in \mathbb R}\left(\int\limits_{\mathbb D} |g(x)-c|^rdx\right)^{\frac{1}{r}} \leq B_{r,2}(\mathbb D)
\left(\int\limits_{\mathbb D} |\nabla g(x)|^2 dx\right)^{\frac{1}{2}}.
$$ 

\vskip 0.3cm

The description of compactness of Sobolev embedding operators in the terms of capacity integrals was obtained in \cite{M}. In the present work we give sufficient conditions of compactness of Sobolev embedding operators in the terms of quasiconformal geometry of domains.

The suggested method is based on the theory of composition operators \cite{U93,VU02}
and its applications to the Sobolev type embedding theorems \cite{GG94,GU09}. 

The following diagram illustrates this idea:

\[\begin{array}{rcl}
W^{1}_{2}(\Omega) & \stackrel{\varphi^*}{\longrightarrow} & W^{1}_{2}(\mathbb D) \\[2mm]
\multicolumn{1}{c}{\downarrow} & & \multicolumn{1}{c}{\downarrow} \\[1mm]
L_s(\Omega) & \stackrel{(\varphi^{-1})^*}{\longleftarrow} & L_r(\mathbb D).
\end{array}\]

Here the operator $\varphi^*$ defined by the composition rule
$\varphi^*(f)=f \circ \varphi$ is a bounded composition operator on Sobolev  
spaces induced by a homeomorphism $\varphi$ of $\mathbb D$ and $\Omega$ and
the operator $(\varphi^{-1})^*$ defined by the composition rule
$(\varphi^{-1})^*(f)=f \circ \varphi^{-1}$ is a bounded composition operator on
Lebesgue spaces. This method allows to transfer Poincar\'e-Sobolev inequalities 
from regular domains (for example, from the unit disc $\mathbb D$) to $\Omega$.

In the recent works we study composition operators on Sobolev spaces defined on
planar domains in connection with the conformal mappings theory \cite{GU12}. This connection
leads to weighted Sobolev  embeddings \cite{GU13,GU14} with the universal conformal weights.
Another application of conformal composition operators was given in \cite{BGU15} where the 
spectral stability problem for conformal regular domains was considered.

\section{Composition operators and quasiconformal mappings}

In this section we recall basic facts about composition operators on Lebesgue and Sobolev spaces and also the quasiconformal mappings theory. Let $\Omega\subset\mathbb R^n$, $n\geq 2$, be a domain.
For any $1\leq p<\infty$ we consider the Lebesgue space $L_p(\Omega)$ of measurable functions $f: \Omega \to \mathbb{R}$ equipped with the following norm:
\[
\|f\mid L_p(\Omega)\|=\biggr(\int\limits _{\Omega}|f(x)|^{p}\, dx\biggr)^{\frac{1}{p}}<\infty.
\]  

The following theorem about composition operators on Lebesgue spaces is well known (see, for example \cite{VU02}):
\begin{theorem}\label{COL}
Let $\varphi :\Omega \to \widetilde{\Omega}$ be a weakly differentiable homeomorphism between two domains $\Omega$ and $\widetilde{\Omega}$.
Then the composition operator
\[
\varphi^{*}: L_r(\widetilde{\Omega}) \to L_s(\Omega),\,\,\,1 \leq s \leq r< \infty,
\] 
is bounded, if and only if $\varphi^{-1}$ possesses the Luzin $N$-property and
\[
\biggr(\int\limits _{\widetilde{\Omega}}|J(y,\varphi^{-1})|^{\frac{r}{r-s}}\, dy\biggr)^{\frac{r-s}{rs}}=K< \infty,\,\,\,1 \leq s<r< \infty,
\]
\[
\esssup\limits_{y \in \widetilde{\Omega}}|J(y,\varphi^{-1})|^{\frac{1}{s}}=K< \infty,\,\,\,1 \leq s=r< \infty.
\] 
The norm of the composition operator $\|\varphi^{*}\|=K$.
\end{theorem}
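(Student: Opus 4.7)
The plan is to prove both implications of the equivalence via the change of variables formula for $\varphi$, using the Jacobian of the inverse to control the push-forward of Lebesgue measure. The whole argument hinges on the Luzin $N$-property of $\varphi^{-1}$: this property is exactly what guarantees that the set function $E \mapsto |\varphi^{-1}(E)|$ on measurable subsets $E \subset \widetilde{\Omega}$ is absolutely continuous with respect to Lebesgue measure, so that its Radon--Nikodym derivative $|J(y,\varphi^{-1})|$ is well-defined and the identity
$$
\int\limits_\Omega |f(\varphi(x))|^s \, dx = \int\limits_{\widetilde{\Omega}} |f(y)|^s \, |J(y,\varphi^{-1})| \, dy
$$
holds for every nonnegative measurable $f$.

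For sufficiency, assuming the Luzin $N$-property of $\varphi^{-1}$ together with the stated integrability of $|J(\cdot,\varphi^{-1})|$, I would apply H\"older's inequality with dual exponents $r/s$ and $r/(r-s)$ to the right-hand side of this identity, obtaining
$$
\int\limits_\Omega |f\circ\varphi|^s \, dx \leq \biggl(\,\int\limits_{\widetilde{\Omega}}|f(y)|^r\,dy\biggr)^{\!s/r} \biggl(\,\int\limits_{\widetilde{\Omega}}|J(y,\varphi^{-1})|^{\frac{r}{r-s}}\,dy\biggr)^{\!\frac{r-s}{r}}.
$$
Taking $s$-th roots yields exactly $\|\varphi^* f \mid L_s(\Omega)\| \leq K \|f \mid L_r(\widetilde{\Omega})\|$. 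For the diagonal case $s=r$, I would bound $|J(y,\varphi^{-1})|$ by its essential supremum directly in the identity, producing the same estimate with constant $K$.

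For necessity, I would begin by testing boundedness on characteristic functions $f = \chi_E$ for measurable $E \subset \widetilde{\Omega}$: the resulting inequality $|\varphi^{-1}(E)|^{1/s} \leq \|\varphi^*\| \, |E|^{1/r}$ simultaneously establishes the Luzin $N$-property of $\varphi^{-1}$ and the absolute continuity of $E \mapsto |\varphi^{-1}(E)|$, so the change of variables identity becomes available and its integrand is the promised Jacobian. Substituting $g = |f|^s$ converts the boundedness estimate into
$$
\int\limits_{\widetilde{\Omega}} g(y)\,|J(y,\varphi^{-1})|\,dy \leq \|\varphi^*\|^s \, \|g \mid L_{r/s}(\widetilde{\Omega})\|
$$
for every nonnegative $g \in L_{r/s}(\widetilde{\Omega})$. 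Since this holds for all such $g$, $L_p$-duality (with $L_1$--$L_\infty$ duality when $s=r$) yields $|J(\cdot,\varphi^{-1})| \in L_{r/(r-s)}(\widetilde{\Omega})$ with norm at most $\|\varphi^*\|^s$, giving the reverse estimate $K \leq \|\varphi^*\|$. Combined with the sufficiency direction this pins down $\|\varphi^*\|=K$.

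The main technical obstacle is the rigorous justification of the change of variables identity for a weakly differentiable homeomorphism whose inverse is not a priori smooth: the Luzin $N$-property of $\varphi^{-1}$ is precisely the delicate measure-theoretic condition needed so that the area formula gives an identity rather than a mere inequality (otherwise measure can be lost on the singular set where $\varphi$ fails to be differentiable). Once this is in place, the rest of the proof is a clean pairing of H\"older's inequality with its sharp converse via $L_p$-duality.
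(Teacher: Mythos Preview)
The paper does not supply its own proof of this theorem: it is quoted as a well-known result with a citation to \cite{VU02}, so there is nothing in the paper to compare against. Your argument is the standard one and is correct---the sufficiency direction is exactly the change-of-variables identity followed by H\"older with exponents $r/s$ and $r/(r-s)$, and the necessity direction via testing on characteristic functions (to extract the Luzin $N$-property and absolute continuity of $E\mapsto|\varphi^{-1}(E)|$) followed by $L_{r/s}$--$L_{r/(r-s)}$ duality is precisely how the sharp equality $\|\varphi^*\|=K$ is obtained in the literature.
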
 

We consider the Sobolev space $W^1_p(\Omega)$, $1\leq p<\infty$,
as a Banach space of locally integrable weakly differentiable functions
$f:\Omega\to\mathbb{R}$ equipped with the following norm: 
\[
\|f\mid W^1_p(\Omega)\|=\biggr(\int\limits _{\Omega}|f(x)|^{p}\, dx\biggr)^{\frac{1}{p}}+
\biggr(\int\limits _{\Omega}|\nabla f(x)|^{p}\, dx\biggr)^{\frac{1}{p}}.
\]
Recall that the Sobolev space $W^1_p(\Omega)$ coincides with the closer of the space of smooth functions $C^{\infty}(\Omega)$ in the norm of $W^1_p(\Omega)$.

We consider also the homogeneous seminormed Sobolev space $L^1_p(\Omega)$, $1\leq p<\infty$,
of locally integrable weakly differentiable functions $f:\Omega\to\mathbb{R}$ equipped
with the following seminorm: 
\[
\|f\mid L^1_p(\Omega)\|=\biggr(\int\limits _{\Omega}|\nabla f(x)|^{p}\, dx\biggr)^{\frac{1}{p}}.
\]

Recall the notion of the $p$-capacity of a set $E\subset \Omega$. Let $\Omega$ be a domain in $\mathbb R^n$ and a compact $F\subset\Omega$. The $p$-capacity of the compact $F$ is defined by
$$
\cp_p(F;\Omega)=\inf\{\|f|L^1_p(\Omega\|^p,\,\,f\geq 1\,\, \text{on}\,\, F, \,\,f\in C_0(\Omega)\}.
$$
By the similar way we can define $p$-capacity of open sets.

For arbitrary set $E\subset\Omega$ we define a inner $p$-capacity as 
$$
\underline{\cp_p}(E;\Omega)=\sup\{\cp_p(e;\Omega),\,\,e\subset E\subset\Omega,\,\, e\,\,\text{is a compact}\},
$$
and a outer $p$-capacity as 
$$
\overline{\cp_p}(E;\Omega)=\inf\{\cp_p(U;\Omega),\,\,E\subset U\subset\Omega,\,\, U\,\,\text{is an open set}\}.
$$
A set $E\subset\Omega$ is called $p$-capacity measurable, if $\underline{\cp_p}(E;\Omega)=\overline{\cp_p}(E;\Omega)$. The value
$$
\cp_p(E;\Omega)=\underline{\cp_p}(E;\Omega)=\overline{\cp_p}(E;\Omega)
$$
is called the $p$-capacity of the set $E\subset\Omega$.

By the standard definition functions of the class $L^1_p(\Omega)$ are defined only up to a set of measure zero, but they can be redefined quasi-everywhere i.~e. up to a set of conformal capacity zero. Indeed, every function $u\in L^1_p(\Omega)$ has a unique quasi-continuous representation $\tilde{u}\in L^1_p(\Omega)$. A function $\tilde{u}$ is termed quasi-continuous if for any $\varepsilon >0$ there is an open  set $U_{\varepsilon}$ such that the conformal capacity of $U_{\varepsilon}$ is less then $\varepsilon$ and the function  $\tilde{u}$ is continuous on the set $\Omega\setminus U_{\varepsilon}$ 
(see, for example \cite{HKM,M}). 

Let $\Omega$ and $\widetilde{\Omega}$ be domains in $\mathbb R^n$. We say that
a homeomorphism $\varphi:\Omega\to\widetilde{\Omega}$ induces by the composition rule $\varphi^{\ast}(f)=f\circ\varphi$ a bounded composition operator 
\[
\varphi^{\ast}:L^1_p(\widetilde{\Omega})\to L^1_q(\Omega),\,\,\,1\leq q\leq p\leq\infty,
\]
if the composition $\varphi^{\ast}(f)\in L^1_q(\Omega)$
is defined quasi-everywhere in $\Omega$ and there exists a constant $K_{p,q}(\Omega)<\infty$ such that 
\[
\|\varphi^{\ast}(f)\mid L^1_q(\Omega)\|\leq K_{p,q}(\Omega)\|f\mid L^1_p(\widetilde{\Omega})\|
\]
for
any function $f\in L^1_p(\widetilde{\Omega})$ \cite{VU04}.

Let $\Omega\subset\mathbb R^n$ be an open set. A mapping $\varphi:\Omega\to\mathbb R^n$ belongs to $L^1_{p,\loc}(\Omega)$, 
$1\leq p\leq\infty$, if its coordinate functions $\varphi_j$ belong to $L^1_{p,\loc}(\Omega)$, $j=1,\dots,n$.
In this case the formal Jacobi matrix
$D\varphi(x)=\left(\frac{\partial \varphi_i}{\partial x_j}(x)\right)$, $i,j=1,\dots,n$,
and its determinant (Jacobian) $J(x,\varphi)=\det D\varphi(x)$ are well defined at
almost all points $x\in \Omega$. The norm $|D\varphi(x)|$ of the matrix
$D\varphi(x)$ is the norm of the corresponding linear operator $D\varphi (x):\mathbb R^n \rightarrow \mathbb R^n$ defined by the matrix $D\varphi(x)$.

Let $\varphi:\Omega\to\widetilde{\Omega}$ be weakly differentiable in $\Omega$. The mapping $\varphi$ is the mapping of finite distortion if $|D\varphi(z)|=0$ for almost all $x\in Z=\{z\in\Omega : J(x,\varphi)=0\}$.

A mapping $\varphi:\Omega\to\mathbb R^n$ possesses the Luzin $N$-property if a image of any set of measure zero has measure zero.
Mote that any Lipschitz mapping possesses the Luzin $N$-property.

The following theorem gives the analytic description of composition operators on Sobolev spaces:

\begin{theorem}
\label{CompTh} \cite{U93, VU02} A homeomorphism $\varphi:\Omega\to\widetilde{\Omega}$
between two domains $\Omega$ and $\widetilde{\Omega}$ induces a bounded composition
operator 
\[
\varphi^{\ast}:L^1_p(\widetilde{\Omega})\to L^1_q(\Omega),\,\,\,1\leq p<\infty,
\]
 if and only if $\varphi\in W_{1,\loc}^{1}(\Omega)$, has finite distortion,
and 
$$
K_{p,q}(\Omega)=\left(\int\limits_\Omega \left(\frac{|D\varphi(x)|^p}{|J(x,\varphi)|}\right)^\frac{q}{p-q}~dx\right)^\frac{p-q}{pq}<\infty.
$$
\end{theorem}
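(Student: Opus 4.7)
The plan is to prove the two implications separately. Sufficiency follows from the chain rule combined with H\"older's inequality and the change-of-variables formula for injective Sobolev mappings of finite distortion. Necessity proceeds by testing $\varphi^{*}$ on explicit Sobolev functions and then extracting a pointwise distortion bound from a capacitary inequality via a Lebesgue-type differentiation step.

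For sufficiency, assume $\varphi \in W^{1}_{1,\loc}(\Omega)$ has finite distortion and $K_{p,q}(\Omega)<\infty$. For smooth $f$ on $\widetilde{\Omega}$ the chain rule gives $\nabla(f\circ\varphi)(x)=D\varphi(x)^{T}\nabla f(\varphi(x))$ almost everywhere; since $\varphi$ has finite distortion, both sides vanish on $Z=\{J(\cdot,\varphi)=0\}$, so approximation extends the identity to all $f\in L^{1}_{p}(\widetilde{\Omega})$. Off $Z$ I split the integrand of $\int_{\Omega}(|\nabla f|\circ\varphi)^{q}|D\varphi|^{q}\,dx$ as
\[
\bigl((|\nabla f|\circ\varphi)^{q}|J(\cdot,\varphi)|^{q/p}\bigr)\cdot\bigl(|D\varphi|^{q}|J(\cdot,\varphi)|^{-q/p}\bigr)
\]
and apply H\"older's inequality with exponents $p/q$ and $p/(p-q)$. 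The second factor integrates to $K_{p,q}(\Omega)^{q}$, and the first factor, after change of variables $y=\varphi(x)$ for injective Sobolev mappings of finite distortion, becomes $\|\nabla f\mid L_{p}(\widetilde{\Omega})\|^{q}$. Hence $\|\varphi^{*}f\mid L^{1}_{q}(\Omega)\|\leq K_{p,q}(\Omega)\|f\mid L^{1}_{p}(\widetilde{\Omega})\|$.

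For necessity, I would first test $\varphi^{*}$ on functions of the form $y\mapsto\eta(y)y_{i}$ with smooth compactly supported $\eta$ to conclude $\varphi_{i}\in W^{1}_{1,\loc}(\Omega)$. Next, boundedness of $\varphi^{*}$ together with the definition of capacity via admissible functions yields
\[
\cp_{q}(\varphi^{-1}(F);\Omega)\leq \|\varphi^{*}\|^{q}\,\cp_{p}(F;\widetilde{\Omega})^{q/p}
\]
for every compact $F\subset\widetilde{\Omega}$. Applying this to condensers concentrated in small balls around a Lebesgue point $\varphi(x)$ and differentiating the resulting set function with respect to Lebesgue measure produces the pointwise bound $|D\varphi(x)|^{p}/|J(x,\varphi)|\leq C(x)$ with $C\in L_{q/(p-q)}(\Omega)$; the finite-distortion property appears as the statement that this ratio is well-defined almost everywhere, and integration of $C$ recovers $K_{p,q}(\Omega)$.

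The main obstacle is the necessity direction, specifically passing from the global capacitary inequality to the sharp pointwise distortion estimate. This is the technical heart of the Vodop'yanov--Ukhlov theory: one needs a Lebesgue-differentiation framework for the set function $E\mapsto\cp_{q}(\varphi^{-1}(E);\Omega)$ that simultaneously handles the zero set $Z$ (so that finite distortion emerges rather than being assumed) and yields the correct Lebesgue exponent $q/(p-q)$. By contrast, the sufficiency direction, once the chain rule and change of variables for Sobolev homeomorphisms of finite distortion are in hand, reduces to a single application of H\"older's inequality.
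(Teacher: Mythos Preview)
The paper does not prove this theorem at all: it is quoted with the citation \cite{U93,VU02} and used as a black box, so there is no ``paper's own proof'' to compare against. Your outline is the standard strategy from those references (Ukhlov 1993; Vodop'yanov--Ukhlov 2002), and the sufficiency half is essentially complete as written: chain rule, finite distortion to kill the contribution of $Z$, H\"older with exponents $p/q$ and $p/(p-q)$, and the change-of-variables inequality for injective Sobolev maps.

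Where your sketch is thin---and you flag this yourself---is the necessity direction. Two points deserve sharpening. First, deducing $\varphi\in W^{1}_{1,\loc}$ from boundedness of $\varphi^{*}$ requires more than testing on coordinate functions: one must also verify that $\varphi$ is differentiable in the Sobolev sense (absolute continuity on lines, or the ACL property), which in the cited works comes from the capacitary estimate applied to condensers and a modulus-of-continuity argument. Second, the passage from the capacity inequality to the pointwise bound $|D\varphi|^{p}/|J(\cdot,\varphi)|\in L_{q/(p-q)}$ is not a routine Lebesgue differentiation: the relevant set function $\Phi(E)=\bigl(\text{operator norm restricted to }\varphi^{-1}(E)\bigr)^{\kappa}$ must first be shown to be countably additive (or at least quasi-additive) on disjoint open sets, and it is the derivative of \emph{this} set function, not of capacity itself, that yields the distortion density. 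Your description conflates these two set functions. None of this is wrong in spirit, but a referee would ask you to either reproduce the additive-set-function machinery of \cite{U93,VU02} or cite it explicitly rather than gesture at ``a Lebesgue-type differentiation step.''
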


Recall that a homeomorphism $\varphi: \Omega\to \widetilde{\Omega}$ is called a $K$-quasiconformal mapping if $\varphi\in W^1_{n,\loc}(\Omega)$ and there exists a constant $1\leq K<\infty$ such that
$$
|D\varphi(x)|^n\leq K |J(x,\varphi)|\,\,\text{for almost all}\,\,x\in\Omega.
$$

Quasiconformal mappings have a finite distortion, i.~e.  $D\varphi(x)=0$ for almost all points $x$
that belongs to set $Z=\{x\in \Omega:J(x,\varphi)=0\}$ because any quasiconformal mapping possesses Luzin $N$-property and an inverse mapping is also quasiconformal.

If $\varphi : \Omega \to \widetilde{\Omega}$ is a $K$-quasiconformal mapping then $\varphi$ is differentiable almost everywhere in $\Omega$ and
$$
|J(x,\varphi)|=J_{\varphi}(x):=\lim\limits_{r\to 0}\frac{|\varphi(B(x,r))|}{|B(x,r)|}\,\,\text{for almost all}\,\,x\in\Omega.
$$   

For any planar $K$-quasiconformal homeomorphism $\varphi : \Omega \to \widetilde{\Omega}$,
the following sharp results is known: $J(x,\varphi) \in L_{p,\loc}(\widetilde{\Omega})$ 
for any $p<K/(K-1)$ (\cite{Ast}).

If $K\equiv 1$ then $1$-quasiconformal homeomorphisms are conformal mappings and in the space $\mathbb R^n$, $n\geq 3$, are exhausted by M\"obius transformations.  

\begin{definition}
We call a bounded domain $\Omega\subset\mathbb R^2$ as $(r,q)$-Poincar\'e domain, $1 \leq q,r \leq \infty$, if the Poincar\'e--Sobolev inequality
$$
\inf_{c \in \mathbb{R}} ||g-c\,|\,L_r(\Omega)|| \leq B_{r,q}(\Omega) ||\nabla g\,|\,L_q(\Omega)||
$$
holds for any $g \in L_q^1(\Omega)$ with the Poincar\'e constant $B_{r,q}(\Omega)< \infty$. The unit disc $\mathbb D\subset\mathbb R^2$ is an example of the
$(r,2)$-embedding domain for all $r \geq 1$.  
\end{definition}

The following theorem gives a characterization of composition operators in the classical Sobolev spaces $W_p^1$ (see, for example \cite{GG94, GU09, GU16}): 
\begin{theorem}\label{COS}
Let $\Omega\subset\mathbb R^n$ be an $(r,q)$-Poincar\'e domain for some $1 \leq q \leq r \leq \infty$ and a domain $\widetilde{\Omega}$ has finite measure.
Suppose that a homeomorphism $\varphi:\Omega\to\widetilde{\Omega}$ induces a bounded composition operator
\[
\varphi^{*}: L_p^1(\widetilde{\Omega}) \to L_q^1(\Omega),\,\,\,1 \leq q \leq p< \infty,
\] 
and the inverse homeomorphism $\varphi^{-1}:\widetilde{\Omega}\to\Omega$ induces a bounded composition operator
\[
(\varphi^{-1})^{*}: L_r(\Omega) \to L_s(\widetilde{\Omega}),\,\,\,1 \leq s \leq r< \infty,
\]
for some $p \leq s \leq r$. 

Then $\varphi:\Omega\to\widetilde{\Omega}$ induces a bounded composition operator
\[
\varphi^{*}: W_p^1(\widetilde{\Omega}) \to W_q^1(\Omega),\,\,\,1 \leq q \leq p< \infty,
\] 
\end{theorem}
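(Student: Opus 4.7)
My plan is to split the $W_q^1(\Omega)$ norm into its two pieces
$$\|\varphi^*(f)\mid W_q^1(\Omega)\|=\|f\circ\varphi\mid L_q(\Omega)\|+\|\nabla(f\circ\varphi)\mid L_q(\Omega)\|$$
and to bound each by $\|f\mid W_p^1(\widetilde{\Omega})\|$. The gradient part is immediate: the hypothesis that $\varphi^*:L_p^1(\widetilde{\Omega})\to L_q^1(\Omega)$ is a bounded operator gives directly
$$\|\nabla(f\circ\varphi)\mid L_q(\Omega)\|\leq K_{p,q}(\Omega)\,\|\nabla f\mid L_p(\widetilde{\Omega})\|.$$
All the real work lies in controlling $\|f\circ\varphi\mid L_q(\Omega)\|$.

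First I would apply the $(r,q)$-Poincar\'e inequality on $\Omega$ to $f\circ\varphi\in L_q^1(\Omega)$, producing a constant $c=c(f)\in\mathbb R$ for which
$$\|f\circ\varphi-c\mid L_r(\Omega)\|\leq B_{r,q}(\Omega)\,K_{p,q}(\Omega)\,\|\nabla f\mid L_p(\widetilde{\Omega})\|.$$
Because $\Omega$ is a bounded Poincar\'e domain and $q\leq r$, H\"older's inequality upgrades this to
$$\|f\circ\varphi-c\mid L_q(\Omega)\|\leq|\Omega|^{\frac{1}{q}-\frac{1}{r}}\,B_{r,q}(\Omega)\,K_{p,q}(\Omega)\,\|\nabla f\mid L_p(\widetilde{\Omega})\|,$$
so it remains only to estimate $|c|$ in terms of $\|f\mid W_p^1(\widetilde{\Omega})\|$.

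For the estimate of $|c|$ I would invoke the inverse composition operator. Since $(\varphi^{-1})^*(f\circ\varphi-c)=f-c$ on $\widetilde{\Omega}$, the hypothesis $(\varphi^{-1})^*:L_r(\Omega)\to L_s(\widetilde{\Omega})$ yields
$$\|f-c\mid L_s(\widetilde{\Omega})\|\leq\|(\varphi^{-1})^*\|\cdot\|f\circ\varphi-c\mid L_r(\Omega)\|.$$
As $\widetilde{\Omega}$ has finite measure and $p\leq s$, a second H\"older step gives $\|f-c\mid L_p(\widetilde{\Omega})\|\leq|\widetilde{\Omega}|^{1/p-1/s}\|f-c\mid L_s(\widetilde{\Omega})\|$, and the triangle inequality
$$|c|\,|\widetilde{\Omega}|^{1/p}\leq\|f\mid L_p(\widetilde{\Omega})\|+\|f-c\mid L_p(\widetilde{\Omega})\|$$
delivers $|c|\leq C\,\|f\mid W_p^1(\widetilde{\Omega})\|$. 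Finally, $\|f\circ\varphi\mid L_q(\Omega)\|\leq\|f\circ\varphi-c\mid L_q(\Omega)\|+|c|\,|\Omega|^{1/q}$ closes the argument.

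The main obstacle is conceptual rather than technical: the Poincar\'e inequality on $\Omega$ controls only the deviation $f\circ\varphi-c$, and trying to recover $|c|$ from $\|f\circ\varphi\mid L_q(\Omega)\|$ directly would be circular. The key idea is to exploit $(\varphi^{-1})^*$ to transport the Poincar\'e defect back to $\widetilde{\Omega}$, where the finite measure together with the $L_p$ part of the ambient $W_p^1$ norm finally pins down $|c|$. The chain $q\leq p\leq s\leq r$ built into the hypotheses is exactly what makes the two H\"older interpolations legitimate at the two ends of the diagram $W_p^1(\widetilde{\Omega})\to W_q^1(\Omega)$.
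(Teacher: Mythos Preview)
The paper does not actually prove this theorem; it merely states it and cites \cite{GG94,GU09,GU16} as sources. So there is no in-paper proof to compare against.

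Your argument is correct and is essentially the standard one for results of this type. The decomposition into the gradient part (handled directly by the $L^1_p\to L^1_q$ hypothesis) and the $L_q$ part (handled via the Poincar\'e inequality on $\Omega$, then transported back to $\widetilde{\Omega}$ by $(\varphi^{-1})^*$ to pin down the constant $c$) is exactly the mechanism one expects, and the chain $q\le p\le s\le r$ is used precisely where it should be for the two H\"older steps. Two tiny cosmetic points: (i) the infimum defining $c$ need not be attained, so you should strictly take $c$ with $\|f\circ\varphi-c\mid L_r(\Omega)\|\le 2\inf_{c'}\|f\circ\varphi-c'\mid L_r(\Omega)\|$, which of course changes nothing; (ii) you use $|\Omega|<\infty$, which is legitimate because the paper's definition of an $(r,q)$-Poincar\'e domain requires $\Omega$ to be bounded.
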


This theorem allows us to obtain compactness of the Sobolev embedding operator in quasiconformal regular domains.

\section{Poincar\'e-Sobolev inequalities}

\textbf{Weighted Poincar\'e-Sobolev inequalities}. Let $\Omega \subset \mathbb R^2$
be a planar domain and let $v : \Omega \to \mathbb R$ be a real valued
function, $v>0$ a.~e. in $\Omega$. We consider the weighted Lebesgue space $L_p(\Omega,v)$, $1\leq p<\infty$, 
of measurable functions $f: \Omega \to \mathbb R$  with the finite norm
$$
\|f\,|\,L_{p}(\Omega,v)\|:= \left(\int\limits_\Omega|f(x)|^pv(x)dx \right)^{\frac{1}{p}}< \infty.
$$
It is a Banach space for the norm $\|f\,|\,L_{p}(\Omega,v)\|$.

The following lemma gives connection between composition operators on Sobolev spaces and the quasiconformal mappings theory \cite{VG75}.

\begin{lemma} \label{L4.1}
A homeomorphism $\varphi: \Omega\to \widetilde{\Omega}$ is a $K$-quasiconformal mapping if and only if $\varphi$ generates by the composition rule $\varphi^{\ast}(f)=f\circ\varphi$ an isomorphism of Sobolev spaces $L^1_n(\Omega)$ and $L^1_n(\widetilde{\Omega})$:
$$
\|\varphi^{\ast}(f) \mid L^1_n(\Omega)\|\leq K^{\frac{1}{n}}\|f \mid L^1_n(\widetilde{\Omega})\|
$$
for any $f\in L^1_n(\widetilde{\Omega})$.
\end{lemma}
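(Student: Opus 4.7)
The plan is to establish the two implications separately, relying in each direction on the analytic description of quasiconformality by the pointwise distortion inequality $|D\varphi|^n\le K|J(\cdot,\varphi)|$ and on Theorem~\ref{CompTh} specialized to the limiting case $p=q=n$.

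For the necessity direction, I would assume $\varphi$ is $K$-quasiconformal, so that $\varphi \in W^{1}_{n,\loc}(\Omega)$, both $\varphi$ and $\varphi^{-1}$ enjoy the Luzin $N$-property, and $\varphi$ is differentiable a.e. For a smooth $f\in L^{1}_{n}(\widetilde\Omega)$ the chain rule gives $\nabla(f\circ\varphi)(x)=\nabla f(\varphi(x))\,D\varphi(x)$ a.e., whence the pointwise bound $|\nabla(f\circ\varphi)(x)|\le|\nabla f(\varphi(x))|\,|D\varphi(x)|$. Raising to the $n$-th power, invoking the distortion inequality, and applying the change-of-variables formula (which is enabled by the $N$-property) then yields
\[
\int_\Omega|\nabla(f\circ\varphi)(x)|^n\,dx \le K\int_\Omega|\nabla f(\varphi(x))|^n J_\varphi(x)\,dx = K\int_{\widetilde\Omega}|\nabla f(y)|^n\,dy.
\]
Taking $n$-th roots gives the operator bound with norm $K^{1/n}$, and a density argument extends this from smooth $f$ to arbitrary $f\in L^{1}_{n}(\widetilde\Omega)$. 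Since $\varphi^{-1}$ is also $K$-quasiconformal, the same computation applied to $\varphi^{-1}$ produces the reverse inequality, and $\varphi^\ast$ is the claimed isomorphism.

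For the sufficiency direction, I would invoke Theorem~\ref{CompTh} in the limiting case $p=q=n$, where the displayed integral characterization of $K_{p,q}(\Omega)$ degenerates into the essential supremum: boundedness of $\varphi^\ast$ with norm at most $K^{1/n}$ is equivalent to
\[
\esssup_{x\in\Omega}\frac{|D\varphi(x)|}{|J(x,\varphi)|^{1/n}}\le K^{1/n}.
\]
Raising to the $n$-th power this reads $|D\varphi(x)|^n\le K|J(x,\varphi)|$ a.e., and the finite distortion and local weak differentiability of $\varphi$ supplied by Theorem~\ref{CompTh}, combined with the resulting local integrability of the Jacobian through this pointwise inequality, upgrade the $W^{1}_{1,\loc}$ regularity of $\varphi$ to $W^{1}_{n,\loc}$ regularity. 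This is exactly $K$-quasiconformality.

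The principal obstacle is the reduction $p=q=n$ in Theorem~\ref{CompTh}: the formula as written involves the exponent $q/(p-q)$, which is undefined when $p=q$, so one has to argue (or cite the underlying capacitary construction of \cite{U93,VU02}) that the essential supremum is the correct replacement for the operator norm in this endpoint case. A secondary technicality is the chain rule for compositions with a quasiconformal map: it relies in an essential way on the Luzin $N$-property of $\varphi$ and $\varphi^{-1}$ together with a.e.\ differentiability, which are precisely what allow a.e.\ identities on $\widetilde\Omega$ to be transported to a.e.\ identities on $\Omega$ without encountering sets of positive measure on which the chain rule could fail.
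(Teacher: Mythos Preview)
The paper does not actually prove Lemma~\ref{L4.1}; it is stated as a known result with a citation to \cite{VG75} (Vodop'yanov--Gol'dshtein), so there is no proof in the paper to compare your attempt against.

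Your outline is essentially the classical argument and is correct. The necessity direction via chain rule, distortion inequality, and change of variables is the standard computation, and your observation that the inverse of a $K$-quasiconformal map is again $K$-quasiconformal immediately gives the isomorphism. For sufficiency, you are right that Theorem~\ref{CompTh} as stated in the paper only displays the integral condition meaningful for $q<p$; the endpoint $p=q$ indeed corresponds to the $L_\infty$ bound $\esssup_\Omega |D\varphi|^p/|J(\cdot,\varphi)| \le K$, which is precisely the analytic definition of $K$-quasiconformality once the $W^1_{n,\loc}$ regularity is in hand. Your remark that the $W^1_{1,\loc}$ regularity and finite distortion furnished by the theorem, together with the pointwise inequality $|D\varphi|^n \le K|J(\cdot,\varphi)|$ and local integrability of the Jacobian of a homeomorphism, bootstrap to $W^1_{n,\loc}$ is the correct way to close this gap. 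The only thing worth adding is that the full proof in \cite{VG75} proceeds via conformal capacity rather than directly through Theorem~\ref{CompTh}, which historically came later; but the two routes are equivalent in substance.
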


On the base of this lemma we prove the universal weighted Poincar\'e-Sobolev inequality which is correct for any simply connected planar domain with non-empty boundary.

\begin{theorem}\label{T4.2}
Suppose that $\Omega\subset\mathbb R^2$ is a simply connected domain with non-empty boundary and $h(y) =|J(y,\varphi^{-1})|$ is 
the quasiconformal weight defined by a $K$-quasiconformal mapping 
$\varphi : \mathbb D \to \Omega$.Then for every function $f \in W^{1}_{2}(\Omega)$,
the inequality
\[
\inf\limits_{c \in \mathbb R}\left(\int\limits_\Omega |f(y)-c|^rh(y)dy\right)^{\frac{1}{r}} \leq B_{r,2}(\Omega,h)
\left(\int\limits_\Omega |\nabla f(y)|^2 dy\right)^{\frac{1}{2}}
\]
holds for any $r \geq 1$ with the constant
$$
B_{r,2}(\Omega,h) \leq K^{\frac{1}{2}} \cdot B_{r,2}(\mathbb D) \leq \left(2^{-1} \pi\right)^{\frac{2-r}{2r}}\left(r+2\right)^{\frac{r+2}{2r}}K^{\frac{1}{2}}.
$$
\end{theorem}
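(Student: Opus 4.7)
The plan is to transfer the Poincar\'e-Sobolev inequality from the unit disc to $\Omega$ along the $K$-quasiconformal map $\varphi:\mathbb D\to\Omega$ supplied by the hypothesis. (Such a $\varphi$ exists whenever $\Omega$ is simply connected with non-empty boundary, by the Riemann mapping theorem, which gives a conformal and therefore $1$-quasiconformal mapping.) For a test function $f\in W^1_2(\Omega)$ set $g=f\circ\varphi$. The weight $h(y)=|J(y,\varphi^{-1})|$ is chosen precisely so that $h(y)\,dy$ pulls back to Lebesgue measure on $\mathbb D$, which is what makes the transfer work cleanly.

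First I would rewrite the left-hand side by change of variables. Since $\varphi$ and $\varphi^{-1}$ are quasiconformal, both possess the Luzin $N$-property and satisfy $|J(\varphi(x),\varphi^{-1})|\cdot |J(x,\varphi)|=1$ almost everywhere, so substituting $y=\varphi(x)$ produces
\[
\int_\Omega |f(y)-c|^r\,h(y)\,dy \;=\; \int_\mathbb D |g(x)-c|^r\,dx
\]
for every $c\in\mathbb R$ and every $r\geq 1$. Taking the infimum over $c$ and applying the Poincar\'e-Sobolev inequality on $\mathbb D$ then gives
\[
\inf_{c\in\mathbb R}\left(\int_\Omega |f(y)-c|^r h(y)\,dy\right)^{1/r} \;\leq\; B_{r,2}(\mathbb D)\left(\int_\mathbb D |\nabla g(x)|^2\,dx\right)^{1/2}.
\]

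Next I would invoke Lemma~\ref{L4.1} with $n=2$: the map $\varphi$ induces an isomorphism $\varphi^{\ast}:L^1_2(\Omega)\to L^1_2(\mathbb D)$ of operator norm at most $K^{1/2}$, so $\|\nabla g\mid L_2(\mathbb D)\|\leq K^{1/2}\|\nabla f\mid L_2(\Omega)\|$. Chaining the two displays yields $B_{r,2}(\Omega,h)\leq K^{1/2}B_{r,2}(\mathbb D)$, and substituting the stated sharp bound for $B_{r,2}(\mathbb D)$ completes the argument.

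The main subtlety is justifying the change-of-variables identity: one needs to know that $g=f\circ\varphi$ is genuinely in $W^1_2(\mathbb D)$, that its weak gradient is given by the chain rule almost everywhere, and that the set $\{J(x,\varphi)=0\}$ contributes nothing after transport to $\Omega$. All three rely on the structural properties of quasiconformal maps already recorded in the excerpt, namely membership in $W^1_{2,\loc}$, finite distortion, and the Luzin $N$-property for both $\varphi$ and $\varphi^{-1}$. Once these are in hand, the theorem is a direct concatenation of the three estimates above.
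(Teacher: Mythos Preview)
Your proposal is correct and follows essentially the same route as the paper: change variables via $y=\varphi(x)$ so that the weighted $L_r$ norm on $\Omega$ becomes the unweighted $L_r$ norm on $\mathbb D$, apply the Poincar\'e--Sobolev inequality on $\mathbb D$, and then bound $\|\nabla g\mid L_2(\mathbb D)\|$ by $K^{1/2}\|\nabla f\mid L_2(\Omega)\|$ via Lemma~\ref{L4.1}. The only minor difference is that the paper first carries out the argument for $f\in L^1_2(\Omega)\cap C^1(\Omega)$ (where membership of $g$ in $L^1_2(\mathbb D)$ and the change-of-variables formula are cited from \cite{VGR}) and then passes to general $f\in W^1_2(\Omega)$ by density, whereas you flag this regularity issue as a subtlety without committing to a specific resolution.
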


Here $B_{r,2}(\mathbb D)$ is the best constant in the (non-weight) Poincar\'e-Sobolev inequality in the unit disc
$\mathbb D \subset \mathbb R^2$  with the upper estimate (see, for example, \cite{GT77,GU16}):
$$
B_{r,2}(\mathbb D) \leq \left(2^{-1} \pi\right)^{\frac{2-r}{2r}}\left(r+2\right)^{\frac{r+2}{2r}}. 
$$ 

\begin{proof} 
By \cite{Ahl66} there exists a $K$-quasiconformal homeomorphism $\varphi : \mathbb D \to \Omega$. 
Then by Lemma \ref{L4.1} the inequality  
\begin{equation}\label{IN2.1}
||\nabla (f \circ \varphi) \,|\, L_{2}(\mathbb D)|| \leq K^{\frac{1}{2}} ||\nabla f \,|\, L_{2}(\Omega)||
\end{equation}
holds for every function $f \in L^{1}_{2}(\Omega)$.

Let $f \in L^{1}_{2}(\Omega) \cap C^1(\Omega)$. Then the function $g=f \circ \varphi$ is defined almost everywhere 
in $\mathbb D$ and belongs to the Sobolev space $L^{1}_{2}(\mathbb D)$ \cite{VGR}. Hence, by the Sobolev embedding 
theorem $g=f \circ \varphi \in W^{1,2}(\mathbb D)$ \cite{M} and the classical Poincar\'e-Sobolev inequality,
\begin{equation}\label{IN2.3}
\inf_{c \in \mathbb R}||f \circ \varphi -c \,|\, L_{r}(\mathbb D)|| \leq B_{r,2}(\mathbb D) ||\nabla (f \circ \varphi) \,|\, L_{2}(\mathbb D)||
\end{equation}
holds for any $r \geq 1$. 

Denote by 
$h(y):=|J(y,\varphi^{-1})|$  quasiconformal weight in $\Omega$.
Using the change of variable formula for quasiconformal mappings \cite{VGR}, the classical Poincar\'e-Sobolev inequality for the unit disc 
$$
\inf\limits_{c \in \mathbb R}\left(\int\limits_{\mathbb D} |g(x)-c|^rdx\right)^{\frac{1}{r}} \leq B_{r,2}(\mathbb D)
\left(\int\limits_{\mathbb D} |\nabla g(x)|^2 dx\right)^{\frac{1}{2}}
$$
and inequality \eqref{IN2.1}, we get
\begin{multline*}
\inf\limits_{c \in \mathbb R}\left(\int\limits_\Omega |f(y)-c|^rh(y)dy\right)^{\frac{1}{r}} =
\inf\limits_{c \in \mathbb R}\left(\int\limits_\Omega |f(y)-c|^r |J(y,\varphi^{-1})| dy\right)^{\frac{1}{r}} \\
{} = \inf\limits_{c \in \mathbb R}\left(\int\limits_{\mathbb D} |g(x)-c|^rdx\right)^{\frac{1}{r}} \leq B_{r,2}(\mathbb D) 
\left(\int\limits_{\mathbb D} |\nabla g(x)|^2 dx\right)^{\frac{1}{2}} \\
{} \leq K^{\frac{1}{2}} B_{r,2}(\mathbb D) 
\left(\int\limits_{\Omega} |\nabla f(y)|^2 dy\right)^{\frac{1}{2}}.
\end{multline*}

Approximating an arbitrary function $f \in W^{1}_{2}(\Omega)$ by smooth functions we have
$$
\inf\limits_{c \in \mathbb R}\left(\int\limits_\Omega |f(y)-c|^rh(y)dy\right)^{\frac{1}{r}} \leq
B_{r,2}(\Omega,h) \left(\int\limits_{\Omega} |\nabla f(y)|^2 dy\right)^{\frac{1}{2}}
$$
with the constant
$$
B_{r,2}(\Omega,h) \leq K^{\frac{1}{2}} \cdot B_{r,2}(\mathbb D) \leq \left(2^{-1} \pi\right)^{\frac{2-r}{2r}}\left(r+2\right)^{\frac{r+2}{2r}}K^{\frac{1}{2}}.
$$
\end{proof}

The property of the quasiconformal $\beta$-regularity implies the integrability of a 
Jacobian of quasiconformal mappings and therefore for any quasiconformal $\beta$-regular domain
we have the embedding of weighted Lebesgue spaces $L_r(\Omega,h)$ into non-weighted Lebesgue
spaces $L_s(\Omega)$ for $s=\frac{\beta -1}{\beta}r$.

\begin{lemma} \label{L4.3}
Let $\Omega$ be a $K$-quasiconformal $\beta$-regular domain.Then for any function
$f \in L_r(\Omega,h)$, $\beta / (\beta - 1) \leq r < \infty$, the inequality
$$
||f\,|\,L_s(\Omega)|| \leq \left(\int\limits_\mathbb D \big|J(x,\varphi)\big|^{\beta}~dx \right)^{{\frac{1}{\beta}} \cdot \frac{1}{s}} ||f\,|\,L_r(\Omega,h)||
$$
holds for $s=\frac{\beta -1}{\beta}r$.
\end{lemma}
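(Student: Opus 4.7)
The plan is to reduce the inequality to a single application of H\"older's inequality combined with the change-of-variables formula for the quasiconformal map $\varphi:\mathbb D\to\Omega$. The point is that the target norm $\|f\mid L_s(\Omega)\|$ carries no weight, while the given norm $\|f\mid L_r(\Omega,h)\|$ does, so we must split $|f|^s$ into a factor that pairs with $h$ to reproduce the weighted norm and a residual factor depending only on $h$ (hence only on the geometry of $\varphi$).

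Concretely, I would write
\[
\int\limits_\Omega |f(y)|^s\,dy=\int\limits_\Omega \bigl(|f(y)|^s h(y)^{s/r}\bigr)\cdot h(y)^{-s/r}\,dy
\]
and apply H\"older with the conjugate exponents $p=r/s$ and $p'=r/(r-s)$. With $s=(\beta-1)r/\beta$ one checks $p=\beta/(\beta-1)$ and $p'=\beta$, so $s/(r-s)=\beta-1$ and $(r-s)/r=1/\beta$. This yields
\[
\int\limits_\Omega |f(y)|^s\,dy\leq \left(\int\limits_\Omega |f(y)|^r h(y)\,dy\right)^{\!s/r}\!\left(\int\limits_\Omega h(y)^{1-\beta}\,dy\right)^{\!1/\beta}.
\]

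The next step is to evaluate the $h^{1-\beta}$ integral on the disc side. Since $\varphi$ and $\varphi^{-1}$ are quasiconformal they both possess the Luzin $N$-property and satisfy $|J(\varphi(x),\varphi^{-1})|\cdot|J(x,\varphi)|=1$ for almost every $x\in\mathbb D$, so $h(\varphi(x))=|J(x,\varphi)|^{-1}$ a.e. The quasiconformal change of variables $y=\varphi(x)$ then gives
\[
\int\limits_\Omega h(y)^{1-\beta}\,dy=\int\limits_\mathbb D |J(x,\varphi)|^{\beta-1}\,|J(x,\varphi)|\,dx=\int\limits_\mathbb D |J(x,\varphi)|^{\beta}\,dx,
\]
which is finite by the $\beta$-regularity hypothesis. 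Taking the $s$-th root of the combined estimate produces exactly the claimed bound with exponent $1/(\beta s)$ on the Jacobian integral.

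The condition $r\geq \beta/(\beta-1)$ is precisely what ensures $s\geq 1$, so that $L_s(\Omega)$ is well-defined and $p=r/s\geq 1$, making H\"older legitimate. I do not expect any serious obstacle: once the Hölder exponents are set correctly everything collapses, and the change-of-variables formula for quasiconformal mappings is standard and has already been invoked in the proof of Theorem~\ref{T4.2} above. The only point to mention carefully is the a.e.\ identity $h\circ\varphi=1/|J(\cdot,\varphi)|$, which follows from the Luzin $N$-property of $\varphi^{-1}$ and the finite distortion of $\varphi$ recorded earlier.
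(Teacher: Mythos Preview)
Your proof is correct and follows essentially the same route as the paper: both insert the factor $h(y)^{s/r}h(y)^{-s/r}$, apply H\"older with exponents $r/s$ and $r/(r-s)$, and then convert the residual integral $\int_\Omega h^{-s/(r-s)}\,dy$ into $\int_{\mathbb D}|J(x,\varphi)|^\beta\,dx$ via the quasiconformal change of variables. Your explicit remark that the hypothesis $r\ge\beta/(\beta-1)$ is exactly what forces $s\ge 1$ is a nice clarification that the paper leaves implicit.
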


\begin{proof}
By the assumptions of the lemma these exists a $K$-quasiconformal mapping $\varphi : \mathbb D \to \Omega$ such that
$$
\int\limits_\mathbb D \big|J(x,\varphi)\big|^{\beta}~dx  < +\infty,
$$
Let $s=\frac{\beta -1}{\beta}r$. 
Then using the change of variable formula for quasiconformal mappings \cite{VGR}, H\"older's inequality with exponents $(r,rs/(r-s))$ and the equality 
$|J(y,\varphi^{-1})| = h(y)$, we obtain
\begin{multline*}
||f\,|\,L_s(\Omega)|| \\
{} = \left(\int\limits_{\Omega} |f(y)|^s dy\right)^{\frac{1}{s}} = 
\left(\int\limits_{\Omega} |f(y)|^s \big|J(y,\varphi^{-1})\big|^{\frac{s}{r}} \big|J(y,\varphi^{-1})\big|^{-\frac{s}{r}}~dy\right)^{\frac{1}{s}} \\
{} \leq \left(\int\limits_{\Omega} |f(y)|^r |J(y,\varphi^{-1})| dy\right)^{\frac{1}{r}}
 \left(\int\limits_{\Omega} \big|J(y,\varphi^{-1})\big|^{- \frac{s}{r-s}}~ dy\right)^{\frac{r-s}{rs}} \\
{} \leq \left(\int\limits_{\Omega} |f(y)|^r h(y)~ dy\right)^{\frac{1}{r}} 
\left(\int\limits_{\mathbb D} \big|J(x,\varphi)\big|^{\frac{r}{r-s}}~ dx\right)^{\frac{r-s}{rs}} \\
{} = \left(\int\limits_{\Omega} |f(y)|^r h(y)~ dy\right)^{\frac{1}{r}}
\left(\int\limits_\mathbb D \big|J(x,\varphi)\big|^{\beta}~dx \right)^{{\frac{1}{\beta}} \cdot \frac{1}{s}}.
\end{multline*}
 
\end{proof}

The following theorem is the main technical tool of this paper:

\vskip 0.3cm
\noindent
{\bf Theorem B.}
\textit{Let $\Omega\subset\mathbb R^2$ be a $K$-quasiconformal $\beta$-regular domain. Then:
\begin{enumerate}[(1)]
\item the embedding operator
\[
i_{\Omega}:W_2^1(\Omega) \hookrightarrow L_s(\Omega)
\]
is compact for any $s \geq 1$;
\item for any function $f \in W^{1}_{2}(\Omega)$ and for any $s \geq 1$, the Poincar\'e--Sobolev inequality 
\[
\inf\limits_{c \in \mathbb R}\left(\int\limits_\Omega |f(y)-c|^sdy\right)^{\frac{1}{s}} \leq B_{s,2}(\Omega)
\left(\int\limits_\Omega |\nabla f(y)|^2 dy\right)^{\frac{1}{2}}
\] 
holds with the constant
$$
B_{s,2}(\Omega) \leq K^{\frac{1}{2}} B_{\frac{\beta s}{\beta-1},2}(\mathbb D) \|J_{\varphi}\mid L_{\beta}(\mathbb D)\|^{\frac{1}{s}}.
$$
\end{enumerate}
 }

\begin{proof}
Let $s \geq 1$. Since $\Omega$ is a $K$-quasiconformal $\beta$-regular domain then there exists a  $K$-quasiconformal mapping 
$\varphi : \mathbb D \to \Omega$ such that
$$
\int\limits_\mathbb D |J(x, \varphi)|^{\beta}~dx < \infty \quad\text{for some}\quad \beta >1.
$$

By Theorem~\ref{COL} the composition operator
\[
(\varphi^{-1})^{*}:L_r(\mathbb D)\to L_s(\Omega)
\]
is bounded if
\[
\biggr(\int\limits _{\mathbb D}|J(x,\varphi)|^{\frac{r}{r-s}}\, dx\biggr)^{\frac{r-s}{rs}}< \infty.
\]
Because $\Omega$ is a $K$-quasiconformal $\beta$-regular domain this condition holds for $r/(r-s)=\beta$ i.e. for $r=\beta s /(\beta -1)$.

Since the mapping  $\varphi: \mathbb D \to \Omega$ induced a bounded composition operator
\[
\varphi^{*}:L^1_2(\Omega)\to L^1_2(\mathbb D),
\]
then by Theorem~\ref{COS} the composition operator
\[
\varphi^{*}:W^1_2(\Omega)\to W^1_2(\mathbb D),
\]
is bounded.

For the unit disc $\mathbb D$ the embedding operator
\[
i_{\mathbb D}:W_2^1(\mathbb D) \hookrightarrow L_r(\mathbb D),
\]
is compact (see, for example \cite{M}) for any $r \geq 1$.

Therefore the embedding operator
\[
i_{\Omega}:W^1_2(\Omega)\to L_s(\Omega)
\]
is compact as a composition of bounded composition operators $\varphi^{*}$, $(\varphi^{-1})^{*}$ and the compact embedding operator $i_{\mathbb D}$.

Let $f\in W^1_2(\Omega)$. Then by Theorem \ref{T4.2} and Lemma \ref{L4.3} we obtain
\begin{multline*}
\inf_{c \in \mathbb R} \left(\int\limits_{\Omega} |f(y)-c|^s dy\right)^{\frac{1}{s}} \\
{} \leq \left(\int\limits_\mathbb D \big|J(x,\varphi)\big|^{\beta}~dx \right)^{{\frac{1}{\beta}} \cdot \frac{1}{s}}
\inf_{c \in \mathbb R} \left(\int\limits_{\Omega} |f(y)-c|^r h(y) dy\right)^{\frac{1}{r}} \\
{} \leq K^{\frac{1}{2}} B_{r,2}(\mathbb D)
\left(\int\limits_\mathbb D \big|J(x,\varphi)\big|^{\beta}~dc \right)^{{\frac{1}{\beta}} \cdot \frac{1}{s}}
\left(\int\limits_\Omega |\nabla f(y)|^2 dy\right)^{\frac{1}{2}}
\end{multline*} 
for $s\geq 1$.
\end{proof}

The following theorem gives compactness of the embedding operator in the case $\beta = \infty$:

\begin{theorem}\label{T4.5}
Let $\Omega$ is a $K$-quasiconformal $\infty$-regular domain. Then:
\begin{enumerate}[(1)]
\item The embedding operator
\[
i_{\Omega}:W_2^1(\Omega) \hookrightarrow L_2(\Omega),
\]
is compact.

\item For any function $f \in W^{1}_{2}(\Omega)$, the Poincar\'e--Sobolev inequality 
\[
\inf\limits_{c \in \mathbb R}\left(\int\limits_\Omega |f(y)-c|^2dy\right)^{\frac{1}{2}} \leq B_{2,2}(\Omega)
\left(\int\limits_\Omega |\nabla f(y)|^2 dy\right)^{\frac{1}{2}}
\]
holds.

\item The following estimate is correct: $B_{2,2}(\Omega) \leq K^{\frac{1}{2}} B_{2,2}(\mathbb D) \big\|J_{\varphi}\mid L_{\infty}(\mathbb D)\big\|^{\frac{1}{2}}$.
Here $B_{2,2}^2(\mathbb D)=1/\mu_1(\mathbb D)$ is the exact for the Poincar\'e inequality in the unit disc.
\end{enumerate}
\end{theorem}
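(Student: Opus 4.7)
The plan is to prove Theorem \ref{T4.5} by the same scheme as Theorem B, but using the $s=r$ case of Theorem \ref{COL} in place of Lemma \ref{L4.3}. Because $\Omega$ is a $K$-quasiconformal $\infty$-regular domain, there is a $K$-quasiconformal mapping $\varphi:\mathbb D\to\Omega$ with $\|J_\varphi\mid L_\infty(\mathbb D)\|<\infty$, and in particular $\esssup_{x\in\mathbb D}|J(x,\varphi)|^{1/2}<\infty$, so Theorem \ref{COL} applied with $s=r=2$ gives the bounded composition operator
\[
(\varphi^{-1})^{*}:L_2(\mathbb D)\to L_2(\Omega),\qquad \|(\varphi^{-1})^{*}\|\leq \|J_\varphi\mid L_\infty(\mathbb D)\|^{1/2}.
\]

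For compactness, I would run the same diagram as in the proof of Theorem B. By Lemma \ref{L4.1} the mapping $\varphi$ induces a bounded composition operator $\varphi^{*}:L^1_2(\Omega)\to L^1_2(\mathbb D)$ with norm bounded by $K^{1/2}$; combined with the bounded operator $(\varphi^{-1})^{*}:L_2(\mathbb D)\to L_2(\Omega)$ just obtained, Theorem \ref{COS} (with $q=p=2$ and $s=r=2$) yields a bounded operator $\varphi^{*}:W^1_2(\Omega)\to W^1_2(\mathbb D)$. The classical Rellich--Kondrachov theorem gives that $i_\mathbb D:W^1_2(\mathbb D)\hookrightarrow L_2(\mathbb D)$ is compact. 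Then
\[
i_\Omega=(\varphi^{-1})^{*}\circ i_\mathbb D\circ \varphi^{*}: W^1_2(\Omega)\to L_2(\Omega)
\]
is compact as the composition of a compact operator with two bounded operators.

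For the Poincar\'e--Sobolev inequality, take $f\in W^1_2(\Omega)\cap C^1(\Omega)$ and set $g=f\circ\varphi\in W^1_2(\mathbb D)$. The classical Poincar\'e inequality in $\mathbb D$ gives
\[
\inf_{c\in\mathbb R}\|g-c\mid L_2(\mathbb D)\|\leq B_{2,2}(\mathbb D)\,\|\nabla g\mid L_2(\mathbb D)\|.
\]
Changing variables $y=\varphi(x)$ and using $|J(y,\varphi^{-1})|=h(y)$ together with the pointwise bound $|J(x,\varphi)|\leq \|J_\varphi\mid L_\infty(\mathbb D)\|$ yields
\[
\int_\Omega |f(y)-c|^2\,dy=\int_\mathbb D |g(x)-c|^2|J(x,\varphi)|\,dx\leq \|J_\varphi\mid L_\infty(\mathbb D)\|\int_\mathbb D |g(x)-c|^2\,dx,
\]
which is the $\beta=\infty$ substitute for Lemma \ref{L4.3}. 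Combining this with the disc Poincar\'e inequality and the quasiconformal gradient estimate from Lemma \ref{L4.1},
\[
\|\nabla g\mid L_2(\mathbb D)\|\leq K^{1/2}\,\|\nabla f\mid L_2(\Omega)\|,
\]
and then approximating an arbitrary $f\in W^1_2(\Omega)$ by smooth functions, produces the asserted inequality with $B_{2,2}(\Omega)\leq K^{1/2}B_{2,2}(\mathbb D)\|J_\varphi\mid L_\infty(\mathbb D)\|^{1/2}$.

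There is no genuine obstacle: every step is an instance of a result already proved in the paper, and the limit $\beta\to\infty$ simply replaces the H\"older estimate in Lemma \ref{L4.3} by the trivial $L_\infty$ bound on the Jacobian. The only point requiring mild care is the endpoint of Theorem \ref{COL}, where the characterization uses $\esssup|J(y,\varphi^{-1})|^{1/s}$, but this is exactly what quasiconformal $\infty$-regularity provides for the inverse mapping (equivalently, for the Jacobian of $\varphi$ itself via the identity $|J(y,\varphi^{-1})|\,|J(\varphi^{-1}(y),\varphi)|=1$ a.e.).
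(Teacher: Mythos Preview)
Your proposal is correct and follows essentially the same route as the paper's own proof: compactness via the factorization $i_\Omega=(\varphi^{-1})^{*}\circ i_{\mathbb D}\circ \varphi^{*}$ using Theorem~\ref{COL} (endpoint $s=r=2$), Lemma~\ref{L4.1}, and Theorem~\ref{COS}; and the constant estimate via change of variable, the Poincar\'e inequality on $\mathbb D$, and the gradient bound from Lemma~\ref{L4.1}. The only cosmetic difference is that for the inequality you change variables to $\mathbb D$ first and then apply the pointwise bound $|J(x,\varphi)|\leq\|J_\varphi\mid L_\infty(\mathbb D)\|$, whereas the paper stays on $\Omega$, inserts the weight $|J(y,\varphi^{-1})|$, bounds $|J(y,\varphi^{-1})|^{-1}$ via the Jacobian identity, and only then changes variables; the two computations are the same up to the order of steps.
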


\begin{proof}
Since $\Omega$ is a $K$-quasiconformal $\infty$-regular domain then there exists a $K$-quasiconformal mapping 
$\varphi:\mathbb D \to \Omega$ such that
\[
\big\|J_{\varphi}\mid L_{\infty}(\mathbb D)\big\|=\esssup\limits_{x \in \mathbb D}|J(x,\varphi)|<\infty.
\]

Hence by Theorem~\ref{COL} the composition operator
\[
(\varphi^{-1})^{*}:L_2(\mathbb D)\to L_2(\Omega)
\]
is bounded.

Since the mapping  $\varphi: \mathbb D \to \Omega$ induced a bounded composition operator
\[
\varphi^{*}:L^1_2(\Omega)\to L^1_2(\mathbb D),
\]
then by Theorem~\ref{COS} the composition operator
\[
\varphi^{*}:W^1_2(\Omega)\to W^1_2(\mathbb D),
\]
is bounded.

For the unit disc $\mathbb D$, the embedding operator
\[
i_{\mathbb D}:W_2^1(\mathbb D) \hookrightarrow L_2(\mathbb D),
\]
is compact (see, for example \cite{M}).

Therefore the embedding operator
\[
i_{\Omega}:W^1_2(\Omega)\to L_2(\Omega),
\]
is compact as a composition of bounded composition operators $\varphi^{*}$, $(\varphi^{-1})^{*}$ and the compact embedding operator $i_{\mathbb D}$:
\[
i_{\mathbb D}:W_2^1(\mathbb D) \hookrightarrow L_2(\mathbb D).
\]

The first part of this theorem is proved.

For every function $f \in W^1_2(\Omega) \cap C^1(\Omega)$ and $g=f \circ \varphi \in W^1_2(\mathbb D)$,
the following inequality are correct:

\begin{multline*}
\inf\limits_{c \in \mathbb R}\left(\int\limits_\Omega |f(y)-c|^2dy\right)^{\frac{1}{2}} =
\inf\limits_{c \in \mathbb R}\left(\int\limits_\Omega |f(y)-c|^2 |J(y,\varphi^{-1})|^{-1} |J(y,\varphi^{-1})|~dy\right)^{\frac{1}{2}} \\
{} \leq \big\|J_{\varphi^{-1}}\mid L_{\infty}(\Omega)\big\|^{-\frac{1}{2}}
\inf\limits_{c \in \mathbb R}\left(\int\limits_\Omega |f(y)-c|^2 |J(y,\varphi^{-1})|~dy\right)^{\frac{1}{2}}.
\end{multline*}

Because quasiconformal mappings possess the Luzin $N$-property, then
$$
\frac{1}{|J(y,\varphi^{-1})|}=|J(x,\varphi)| \,\,\text{for almost all}\,\,x\in \mathbb D\,\,\text{and for almost all}\,\, y\in\Omega.
$$ 

Hence

\begin{multline*}
\inf\limits_{c \in \mathbb R}\left(\int\limits_\Omega |f(y)-c|^2dy\right)^{\frac{1}{2}} \\
{} \leq \big\|J_{\varphi}\mid L_{\infty}(\mathbb D)\big\|^{\frac{1}{2}}
\inf\limits_{c \in \mathbb R}\left(\int\limits_\Omega |f(y)-c|^2 |J(y,\varphi^{-1})|~dy\right)^{\frac{1}{2}}.
\end{multline*}

Using the change of variable formula for quasiconformal mappings \cite{VGR}, the Poincar\'e inequality in the unit disc and the inequality~\eqref{IN2.1} finally we obtain
\begin{multline*}
\inf\limits_{c \in \mathbb R}\left(\int\limits_\Omega |f(y)-c|^2dy\right)^{\frac{1}{2}} \\
{} \leq \big\|J_{\varphi}\mid L_{\infty}(\mathbb D)\big\|^{\frac{1}{2}} 
\inf\limits_{c \in \mathbb R}\left(\int\limits_\Omega |f(y)-c|^2 |J(y,\varphi^{-1})|~dy\right)^{\frac{1}{2}} \\
{} = \big\|J_{\varphi}\mid L_{\infty}(\mathbb D)\big\|^{\frac{1}{2}} 
\inf\limits_{c \in \mathbb R}\left(\int\limits_\mathbb D |g(x)-c|^2~dx\right)^{\frac{1}{2}} \\
{} \leq \big\|J_{\varphi}\mid L_{\infty}(\mathbb D)\big\|^{\frac{1}{2}} 
B_{2,2}(\mathbb D)\left(\int\limits_\mathbb D |\nabla g(x)|^2~dx\right)^{\frac{1}{2}} \\
{} \leq K^{\frac{1}{2}} B_{2,2}(\mathbb D) \big\|J_{\varphi}\mid L_{\infty}(\mathbb D)\big\|^{\frac{1}{2}}
\left(\int\limits_\Omega |\nabla f(y)|^2~dy\right)^{\frac{1}{2}}. 
\end{multline*}
\end{proof}

\section{Eigenvalue Problem for Neumann-Laplacian}

The eigenvalue problem for the free vibrating membrane is equivalent to the corresponding spectral problem
for the Neumann--Laplace operator. The classical formulation of the spectral problem
for the Neumann--Laplace operator in smooth domains in the following:
\[
\begin{cases}
-\Delta u=\mu u & \text{in $\Omega$}\\
\frac{\partial u}{\partial n}=0 & \text{on $\partial \Omega$}.
\end{cases} 
\]

Because quasiconformal regular domain are not necessary smooth, the weak statement of the
spectral problem for the Neumann-Laplace operator is convenient: a function $u$ solves
the previous problem iff $u \in W_2^1(\Omega)$ and 
$$
\int\limits _{\Omega} \nabla u(x) \cdot \nabla v(x)\,dx =
\mu \int\limits _{\Omega} u(x) v(x)\,dx
$$ 
for all $v \in W^{1}_{2}(\Omega)$.

By the Min--Max Principle \cite{D95}, the inverse to the first eigenvalue is equal to the square of the exact
constant in the Poincar\'e inequality:
$$
\inf_{c \in \mathbb{R}} ||f-c\,|\,L_2(\Omega)|| \leq B_{2,2}(\Omega) ||\nabla f\,|\,L_2(\Omega)||,  \quad f \in W^{1}_{2}(\Omega). 
$$

\vskip 0.3cm
\noindent
{\bf Theorem A.}
\textit{Let $\Omega \subset \mathbb R^2$ be a $K$-quasiconformal $\beta$-regular domain. Then the spectrum of the Neumann--Laplace operator in $\Omega$ is discrete,
and can be written in the form of a non-decreasing sequence:
\[
0=\mu_0(\Omega)<\mu_1(\Omega)\leq \mu_2(\Omega)\leq \ldots \leq \mu_n(\Omega)\leq \ldots ,
\]
and
\begin{multline*}
\frac{1}{\mu_1(\Omega)} \leq K B_{\frac{2\beta}{\beta -1},2}(\mathbb D)
\left(\int\limits_\mathbb D \big|J(x,\varphi)\big|^{\beta}~dx \right)^{{\frac{1}{\beta}}} \\
{} \leq
\frac{4K}{\sqrt[\beta]{\pi}} \left(\frac{2\beta -1}{\beta -1}\right)^{\frac{2 \beta-1}{\beta}} \big\|J_{\varphi}\mid L_{\beta}(\mathbb D)\big\|,
\end{multline*}
where $\varphi:\mathbb D \to \Omega$ is the $K$-quasiconformal mapping. }

\begin{proof}
By Theorem~B in the case $s=2$, the embedding operator
$$
i_{\Omega}:W^1_2(\Omega)\to L_2(\Omega)
$$
is compact.

Therefore the spectrum of the Neumann--Laplace operator is discrete and can be written in the form of a non-decreasing sequence.

By the same theorem and the Min-Max principle we have
\[
\inf_{c \in \mathbb R} \left(\int\limits_{\Omega} |f(y)-c|^2 dy\right) \leq B^2_{2,2}(\Omega)
\int\limits_\Omega |\nabla f(y)|^2 dy,
\]
where
\[
B_{2,2}(\Omega) \leq K^{\frac{1}{2}} B_{r,2}(\mathbb D)
\left(\int\limits_\mathbb D \big|J(x,\varphi)\big|^{\beta}~dx \right)^{{\frac{1}{2\beta}}}. 
\]

Hence
\[
\frac{1}{\mu_1(\Omega)} \leq K B^2_{r,2}(\mathbb D)
\left(\int\limits_\mathbb D \big|J(x,\varphi)\big|^{\beta}~dy \right)^{{\frac{1}{\beta}}}.
\]
By the upper estimate of the Poincar\'e constant in the unit disc (see, for example, \cite{GT77,GU16})
\[
B_{r,2}(\mathbb D) \leq \left(2^{-1} \pi\right)^{\frac{2-r}{2r}}\left(r+2\right)^{\frac{r+2}{2r}}.
\]

Recall that by Theorem~B, $r=2\beta /(\beta -1)$. In this case
\[
B_{\frac{2\beta}{\beta -1},2}(\mathbb D) \leq 2\pi^{-\frac{1}{2\beta}} \left(\frac{2\beta -1}{\beta -1}\right)^{\frac{2\beta -1}{2\beta}}.
\]
Thus
\begin{multline*}
\frac{1}{\mu_1(\Omega)} \leq K B_{\frac{2\beta}{\beta -1},2}(\mathbb D)
\left(\int\limits_\mathbb D \big|J(x,\varphi)\big|^{\beta}~dx \right)^{{\frac{1}{\beta}}}\\
{} \leq
\frac{4K}{\sqrt[\beta]{\pi}} \left(\frac{2\beta -1}{\beta -1}\right)^{\frac{2 \beta-1}{\beta}} \big\|J_{\varphi}\mid L_{\beta}(\mathbb D)\big\|.
\end{multline*}
\end{proof}

In case $K$-quasiconformal $\infty$-regular domains we have: 

\begin{theorem}\label{T4.7}
Let $\Omega \subset \mathbb R^2$ be a $K$-quasiconformal $\beta$-regular domain for $\beta = \infty$. Then the spectrum of the Neumann--Laplace operator in $\Omega$ is discrete,
and can be written in the form of a non-decreasing sequence:
\[
0=\mu_0(\Omega)<\mu_1(\Omega)\leq \mu_2(\Omega)\leq \ldots \leq \mu_n(\Omega)\leq \ldots ,
\]
and
\begin{equation}
\frac{1}{\mu_1(\Omega)} \leq K B^2_{2,2}(\mathbb D) \big\|J_{\varphi}\mid L_{\infty}(\mathbb D)\big\|
= \frac{K}{j^2_{1,1}}\big\|J_{\varphi}\mid L_{\infty}(\mathbb D)\big\|,
\end{equation}
where $j'_{1,1}$ is the first positive zero the derivative of the Bessel function $J_1$, and 
$\varphi:\mathbb D \to \Omega$ is the $K$-quasiconformal mapping.
\end{theorem}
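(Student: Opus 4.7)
The plan is to obtain this theorem as a direct consequence of Theorem \ref{T4.5} combined with the Min-Max principle and the known value of the first non-trivial Neumann eigenvalue of the unit disc, so almost no new work is required. Since $\Omega$ is $K$-quasiconformal $\infty$-regular, Theorem \ref{T4.5}(1) gives compactness of the embedding $i_\Omega : W_2^1(\Omega) \hookrightarrow L_2(\Omega)$. Standard spectral theory for the Neumann-Laplacian then yields that its resolvent is compact and self-adjoint, so its spectrum is discrete and consists of a non-decreasing sequence $0 = \mu_0(\Omega) < \mu_1(\Omega) \leq \mu_2(\Omega) \leq \cdots$ accumulating only at infinity; this settles the first assertion.

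For the eigenvalue estimate, I would invoke the Min-Max principle, which identifies $1/\mu_1(\Omega)$ with the square of the sharp Poincar\'e constant $B_{2,2}(\Omega)$ in the inequality
\[
\inf_{c \in \mathbb R} \|f - c \mid L_2(\Omega)\| \leq B_{2,2}(\Omega) \|\nabla f \mid L_2(\Omega)\|, \quad f \in W_2^1(\Omega).
\]
Then Theorem \ref{T4.5}(3) gives $B_{2,2}(\Omega) \leq K^{1/2} B_{2,2}(\mathbb D) \|J_\varphi \mid L_\infty(\mathbb D)\|^{1/2}$. Squaring this inequality immediately yields
\[
\frac{1}{\mu_1(\Omega)} \leq K B_{2,2}^2(\mathbb D) \|J_\varphi \mid L_\infty(\mathbb D)\|.
\]

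It remains to identify $B_{2,2}^2(\mathbb D)$ with $1/(j'_{1,1})^2$. This is classical: applying Min-Max again on the unit disc gives $B_{2,2}^2(\mathbb D) = 1/\mu_1(\mathbb D)$, and separation of variables for the Neumann problem on $\mathbb D$ produces eigenfunctions of the form $J_m(\sqrt{\mu}\,r)e^{im\theta}$ whose Neumann boundary condition $J'_m(\sqrt{\mu}) = 0$ forces $\sqrt{\mu_1(\mathbb D)} = j'_{1,1}$. Substituting gives the final form of the estimate. There is no genuine obstacle in this argument; the only non-mechanical piece is recalling the Bessel-zero computation for $\mu_1(\mathbb D)$, which is standard and may simply be cited.
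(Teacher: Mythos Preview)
Your proposal is correct and mirrors exactly the paper's intended argument: the paper states Theorem~\ref{T4.7} without a separate proof because it follows from Theorem~\ref{T4.5} and the Min--Max principle in precisely the same way that Theorem~A follows from Theorem~B, together with the classical identification $\mu_1(\mathbb D)=(j'_{1,1})^2$. Nothing is missing.
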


As an application of Theorem~\ref{T4.7}, we obtain the lower estimates of the first non-trivial eigenvalue on the 
Neumann eigenvalue problem for the Laplace operator in a non-convex domains with a non-smooth boundaries.

\vskip 0.3cm
\noindent
$\mathbf{Example \, 1.}$
The homeomorphism 
$$
w= \left(|z|^{k-1}z+1\right)^2, \quad z=x+iy, \quad k\geq 1,
$$
is $k$-quasiconformal and maps the unit disc $\mathbb D$ onto the interior of the cardioid
$$
\Omega_c= \left\{(x,y) \in \mathbb R^2: (x^2+y^2-2x)^2-4(x^2+y^2)=0\right\}.
$$ 

We calculate the Jacobian of mapping $w$ by the formula 
$$
J(z,w)=|w_z|^2-|w_{\overline{z}}|^2.
$$
Here
$$
w_z=\frac{1}{2}\left(\frac{\partial w}{\partial x}-i\frac{\partial w}{\partial y}\right) \quad \text{and} \quad 
w_{\overline{z}}=\frac{1}{2}\left(\frac{\partial w}{\partial x}+i\frac{\partial w}{\partial y}\right).
$$

A straightforward calculation yields 
$$
w_z=(k+1)|z|^{k-1}\left(|z|^{k-1}z+1\right) \quad \text{and} \quad w_{\overline{z}}=(k-1)|z|^{k-3}z^2\left(|z|^{k-1}z+1\right).
$$ 
Hence
$$
J(z,w)=4k|z|^{2k-2}\left(|z|^{2k}+|z|^{k-1}(z+\overline{z})+1\right).
$$
Then by Theorem~\ref{T4.7} we have
\[
\frac{1}{\mu_1(\Omega_c)} \leq  \frac{K}{j'^2_{1,1}} \esssup\limits_{|z|\leq 1}\big|J(z,w)\big| \leq \frac{16k^2}{j'^2_{1,1}}.
\]

\vskip 0.3cm
\noindent
$\mathbf{Example \, 2.}$
The homeomorphism 
$$
w= |z|^{k}z, \quad z=x+iy, \quad k\geq 0,
$$
is $(k+1)$-quasiconformal and maps the square 
$$
Q:=\left\{(x,y) \in \mathbb R^2:-\frac{\sqrt{2}}{2} <x< \frac{\sqrt{2}}{2},\, -\frac{\sqrt{2}}{2} <y< \frac{\sqrt{2}}{2}\right\}
$$ 
onto star-shaped domains $\Omega_{\varepsilon}^*$ with vertices $(\pm \sqrt{2}/2,\, \pm \sqrt{2}/2),
(\pm \varepsilon,\,0)$ and \\
$(0,\, \pm \varepsilon)$, where $\varepsilon = (\sqrt{2}/2)^{k+1}$.

\begin{figure}[h!]
\centering
\includegraphics[width=0.6\textwidth]{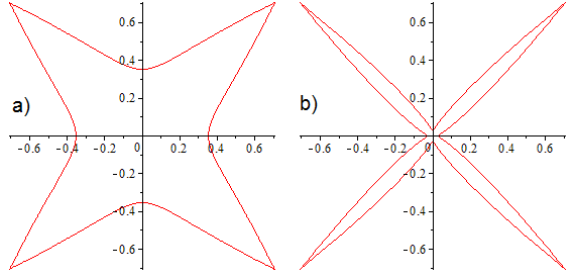}
\caption{Domains $\Omega_{\varepsilon}^*$ under $\varepsilon=\frac{1}{2\sqrt{2}}$ and  $\varepsilon=\frac{1}{32}$.}
\end{figure}

We calculate the partial derivatives of mapping $w$
$$
w_z=(\frac{k}{2}+1)|z|^{k} \quad \text{and} \quad w_{\overline{z}}=\frac{k}{2}|z|^{k-2}z^2.
$$ 
Thus
$$
J(z,w)=(k+1)|z|^{2k}.
$$

Because the square $Q$ is the quasiconformal $\infty$-regular domain, by Theorem~\ref{T4.7} we have
\begin{equation}\label{En1}
\frac{1}{\mu_2(\Omega_{\varepsilon}^*)} \leq 
B^2_{2,2}(Q) \cdot K \cdot \esssup\limits_{|z|\leq 1}|J(z,w)| \leq \frac{2(k+1)^2}{\pi^2}.
\end{equation}
Here $B_{2,2}(Q)=\sqrt{2}/\pi$ (see, for example, \cite{KN}) is the exact constant for the Poincar\'e inequality in the square Q.

In \cite{VV} (see, example 4.2) obtained estimates of constants in weighted Poincar\'e inequality for stars $\Omega_\epsilon^T$:
\begin{equation}\label{En2}
C_{p,\lambda}(\Omega_\epsilon^T)\leq 14 \bar{C}_p\left[\frac{8}{7}\left(\frac{1}{2}+\frac{p}{2\bar{C}_p}\right)\right]^\frac{1}{p},
\end{equation} 
where $p \geq 1$ and
$$
\bar{C}_1=\frac{1}{2}, \quad \bar{C}_2=\frac{1}{\pi}, \quad \bar{C}_p \leq 2\left(\frac{p}{2}\right)^\frac{1}{p}.
$$

Note that estimate~\eqref{En1} under $0 \leq k<6$ is a better by comparison with 
estimate~\eqref{En2} for $p=2$ $(C_{2,\lambda}(\Omega_\epsilon^T) \leq 4 \sqrt{7(1+2 \pi)}/\pi \approx 9,09117)$.

\vskip 0.3cm

{\bf Acknowledgments}:

The first author was supported by the United States-Israel Binational Science Foundation (BSF Grant No. 2014055).

\vskip 0.3cm

\vskip 0.3cm

Department of Mathematics, Ben-Gurion University of the Negev, P.O.Box 653, Beer Sheva, 8410501, Israel 
 
\emph{E-mail address:} \email{vladimir@math.bgu.ac.il} \\           
       
 Department of Higher Mathematics and Mathematical Physics, Tomsk Polytechnic University, 634050 Tomsk, Lenin Ave. 30, Russia;
 Department of General Mathematics, Tomsk State University, 634050 Tomsk, Lenin Ave. 36, Russia

 \emph{Current address:} Department of Mathematics, Ben-Gurion University of the Negev, P.O.Box 653, 
  Beer Sheva, 8410501, Israel  
							
 \emph{E-mail address:} \email{vpchelintsev@vtomske.ru}   \\
			  
	Department of Mathematics, Ben-Gurion University of the Negev, P.O.Box 653, Beer Sheva, 8410501, Israel 
							
	\emph{E-mail address:} \email{ukhlov@math.bgu.ac.il


\begin{thebibliography}{99}

\bibitem{Ahl66} 
Ahlfors,~L., {\em Lectures on quasiconformal mappings.} 
D. Van Nostrand Co., Inc., Toronto, Ont.-New York-London, 1966.

\bibitem{Ast} 
Astala,~K., Area distortion of quasiconformal mappings. 
{\it Acta Math.} 173 (1994), 37--60.

\bibitem{AK}  
Astala,~K., Koskela,~P., Quasiconformal mappings and global integrability of the derivative,
{\it J. Anal. Math.} 57 (1991), 203--220.

\bibitem{BM07} 
Beardon,~A.~F., Minda,~D., The hyperbolic metric and geometric function theory. 
Quasiconformal mappings and their applications, Narosa, New Delhi, (2007) 9--56.

\bibitem{Ber} 
Bertilsson,~D., {\em On Brennan's Conjecture
in conformal mapping.} Doctoral Thesis, Royal Institute of Technology,
Stockholm, Sweden (1999).

\bibitem{BCT15} 
Brandolini,~B., Chiacchio,~F., Trombetti,~C., Optimal lower lounds for eigenvalue of linear and nonlinear
Neumann problems. {\it Proc. of the Royal Soc. of Edinburgh}, 145A (2015), 31--45.

\bibitem{BIw} 
Bojarski,~B., Iwaniec,~T., Analytical foundation of the theoty of quasiconformal mappings
in $\mathbb R^n$. {\it Ann. Acad. Sci. Fenn. Ser. A. I. Math.} 8 (1983), 257--324.

\bibitem{BGU15} 
Burenkov,~V.~I., Gol'dshtein,~V., Ukhlov,~A., Conformal spectral stability for the Dirichlet--Laplacian.
 {\it Math. Nachr.} 288 (2015), 1822--1833.

\bibitem{BGU16} 
Burenkov,~V.~I., Gol'dshtein,~V., Ukhlov,~A., Conformal spectral stability for the Neumann--Laplacian.
 {\it Math. Nachr.} 289 (2016), 2133--2146.

\bibitem{D95} 
Davies,~E.~B., {\em Spectral theory and differential operators.} Cambridge University Press: Cambridge, 1995.

\bibitem{ENT} 
Esposito,~L., Nitsch,~C., Trombetti,~C., Best constants in Poincar\'e inequalities for convex domains.
{\it J. Convex Anal.} 20 (2013), 253--264.

\bibitem{FNT} 
Ferone,~V., Nitsch~C. and Trombetti,~C., 
A remark on optimal weighted Poincar\'e inequalities for convex domains. 
{\it Atti Accad. Naz. Lincei Rend. Lincei Mat. Appl.} 23 (2012), 467--475. 

\bibitem{G60} 
Gehring,~F.~W., The definitions and exceptional sets for quasiconformal mappings.
{\it Ann. Acad. Sci. Fenn. Math.} 281 (1960), 1--28.

\bibitem{GT77}
Gilbarg,~D., Trudinger,~N.S., {\em Elliptic Partial Differential Equations of Second Order.}
Springer-Verlag: Berlin-Heidelberg-New York, 1977.

\bibitem{GG94} 
Gol'dshtein,~V., Gurov,~L., Applications of change of variables operators for exact embedding theorems. 
{\it Integral Equ. Oper. Theory} 19 (1994), 1--24.

\bibitem{GU09} 
Gol'dshtein,~V., Ukhlov,~A., Weighted Sobolev spaces and embedding theorems. 
{\t Trans. Am. Math. Soc.} 361 (2009), 3829--3850. 

\bibitem{GU12} 
Gol'dshtein,~V., Ukhlov,~A., Brennan's conjecture for composition operators on Sobolev spaces. 
{\it Eurasian Math. J.} 3 (2012)(4), 35--43.

\bibitem{GU13} 
Gol'dshtein,~V., Ukhlov,~A., Conformal weights and Sobolev embeddings. 
{\it J. Math. Sci. (N.Y.)} 193 (2013), 202--210.

\bibitem{GU14} 
Gol'dshtein,~V., Ukhlov,~A., Brennan's Conjecture and universal Sobolev inequalities. 
{\it Bull. Sci. Math.} 138 (2014), 253--269.

\bibitem{GU16} 
Gol'dshtein,~V., Ukhlov,~A., On the first Eigenvalues of Free Vibrating Membranes in Conformal Regular Domains. 
{\it Arch. Rational Mech. Anal.} 221 (2016)(2), 893--915.

\bibitem{GU2016} 
Gol'dshtein,~V., Ukhlov,~A., Spectral estimates of the $p$-Laplace Neumann operator in conformal regular domains. 
{\it Transactions of A. Razmadze Math. Inst.} 170 (2016)(1), 137--148.

\bibitem{HShim} 
Hedelman,~H., Shimorin,~S., Weighted Bergman Spaces and the integral 
spectrum of conformal mappings. {\it Duke Mathematical J.} 127 (2005), 341-393.

\bibitem{HK95} 
Heinonen,~J., Koskela,~P., Weighted Sobolev and Poincar\'e inequalities and quasiregular mappings of polynomial type.
{\it Math. Scand.} 77 (1995), 251-271.

\bibitem{HKM}
Heinonen,~J., Kilpel\"ainen,~T., Martio,~O.,
{\em Nonlinear Potential Theory of Degenerate Elliptic Equations.}
Oxford Math. Monographs, Oxford Univ. Press, 1993.

\bibitem{KOT} 
Koskela,~P., Onninen,~J., Tyson,~J.~T., 
Quasihyperbolic boundary conditions and capacity: Poincar\'e domains. 
{\it Math. Ann.} 323 (2002), 811--830.

\bibitem{KN} 
Kuznetsov,~N.G., Nazarov,~A.I., Sharp constants in Poincar\'e, Steklov and related inequalities (a survey). 
{\it Mathematika} 61 (2015), 328--344. 

\bibitem{M} 
Maz'ya,~V., {\em Sobolev spaces: with applications to elliptic
partial differential equations.} Springer: Berlin/Heidelberg, 2010.

\bibitem{P60}
P\'olya,~G., 
On the eigenvalues of vibrating membranes. 
{\it Proc. London Math. Soc.} 11 (1961), 419--433.

\bibitem{PW} 
Payne,~L.~E., Weinberger,~H.~F., 
An optimal Poincar\'e inequality for convex domains. 
{\it Arch. Rat. Mech. Anal.} 5 (1960), 286--292.

\bibitem{U93} 
Ukhlov,~A., On mappings, which induce embeddings of
Sobolev spaces. {\it Siberian Math. J.} 34 (1993), 185--192.

\bibitem{V12} 
Valtorta,~D., Sharp estimate on the first eigenvalue of the p-Laplacian. 
{\it Nonlin. Analysis} 75 (2012), 4974--4994.

\bibitem{VV} 
Veeser,~A., Verf\"urth,~R., Poincar\'e constants for finite element stars. 
{\it IMA J. Numer. Anal.} 32 (2012)(1), 30--47. 

\bibitem{VG75}
Vodop'yanov,~S.~K., Gol'dstein,~V.~M., Lattice isomorphisms of the spaces $W^1_n$ and quasiconformal mappings
{\it Siberian Math. J.} 16 (1975), 224--246.

\bibitem{VGR}
Vodop'yanov,~S.~K., Gol'dstein,~V.~M., Reshetnyak,~Yu.~G.,
On geometric properties of functions with generalized first derivatives.
{\it Uspekhi Mat. Nauk} 34 (1979), 17--65. 

\bibitem{VU98} 
Vodop'yanov,~S.~K., Ukhlov,~A.~D., Sobolev spaces and $(P,Q)$-quasiconformal mappings of Carnot groups.
{\it Siberian Math. J.} 39 (1998), 665--682.

\bibitem{VU02} 
Vodop'yanov,~S.~K., Ukhlov,~A.~D., Superposition
operators in Sobolev spaces (in Russian).
{\it Izvestiya VUZ} 46 (2002)(4), 11--33.

\bibitem{VU04} 
Vodop'yanov,~S.~K., Ukhlov,~A.~D., Set functions and their applications in the theory of Lebesgue and Sobolev spaces. 
{\it Siberian Adv. in Math.} 14 (2004), 78--125.

 
\end{thebibliography}
\end{document}